\theoremstyle{plain}
\newtheorem{thm}{Theorem}[section]
\newtheorem{lem}[thm]{Lemma}
\newtheorem{pro}[thm]{Proposition}
\theoremstyle{definition}
\newtheorem{defin}[thm]{Definition}
\newtheorem{ex}[thm]{Example}
\newcommand{\nh}[1]{{\protect(NH$_{#1}$)}}
\newcommand{\ans}[1]{{\protect(A$n$S$_{#1}$)}}
\newcommand{\fne}[1]{{\protect(F$n$HE$_{#1}$)}}
\newcommand{\uv}[1]{{\protect(UV$^{n-1}_{#1}$)}}
\newcommand{\F}[1]{{\protect\mathcal{F}_{#1}}}
\newcommand{\N}[1]{{\protect\mathcal{N}_{#1}}}
\newcommand{\U}[1]{{\protect\mathcal{U}_{#1}}}
\newcommand{\V}[1]{{\protect\mathcal{V}_{#1}}}
\newcommand{\W}[1]{{\protect\mathcal{W}_{#1}}}
\renewcommand{\P}[1]{{\protect\mathcal{P}_{#1}}}
\newcommand{\Q}[1]{{\protect\mathcal{Q}_{#1}}}
\newcommand{\R}[1]{{\protect\mathcal{R}_{#1}}}
\DeclareMathOperator{\st}{st}
\DeclareMathOperator{\im}{im}
\begin{document}
  \bibliographystyle{abbrv}

%%%%%%% Begin Topmatter %%%%%%%%%%

\title[Near-homeomorphisms of N\"{o}beling manifolds]
{Near-homeomorphisms\\ of N\"{o}beling manifolds}

\author{A. Chigogidze}
\address{Department of Mathematics and Statistics,
University of North Carolina at Greensboro,
383 Bryan Bldg, Greensboro, NC, 27402, USA}
\email{chigogidze@uncg.edu}
\author{A. Nag\'{o}rko}
\address{Department of Mathematics and Statistics,
Univeristy of North Carolina at Greensboro,
387 Bryan Bldg, Greensboro, NC, 27402, USA}
\email{a\_nagork@uncg.edu}
\keywords{$n$-dimensional N\"obeling manifold, $Z$-set unknotting, near-homeomorphism}
\subjclass{Primary: 55M10; Secondary: 54F45}

%%%%%%% End topmatter %%%%%%%%%

\begin{abstract}
We characterize maps between $n$-dimensional N\"obeling manifolds that can be approximated by homeomorphisms.
\end{abstract}

\maketitle

\section{Introduction}
A long standing problem (see, for example, \cite[TC 10]{west}, \cite[Conjecture 5.0.5]{chibook}) of characterizing topologically universal $n$-dimensional N\"obeling space, as well as manifolds modeled on it, was solved recently by M.~Levin \cite{levin} and A.~Nag\'{o}rko \cite{nagorkophd}. Theory of N\"obeling manifolds, developed in \cite{levin}, \cite{levin2}, \cite{nagorkophd} based on completely different approaches, among other things contains various versions of $Z$-set unknotting theorem, open embedding theorem, $n$-homotopy classification theorem, etc. 

In this note we complete the picture by proving that for $n$-dimensional N\"obeling manifolds classes of
near-homeomorphisms, approximately $n$-soft maps, fine $n$-homotopy
equivalences and UV$^{n-1}$-mappings coincide. Recall that an
$n$-dimensional N\"obeling manifold is a Polish space locally
homeomorphic to $\nu^n$, the subset of $\mathbb{R}^{2n+1}$ consisting
of all points with at most $n$ rational coordinates.
 
\begin{defin}\label{D:relative}
  For each map~$f$ from a space~$X$ into a space~$Y$, for each open
  cover $\U{}$ of $Y$ and for each integer $n$, we define the
  following conditions.  \em\begin{enumerate}
  \item[\scriptsize\nh{\U{}}] There exists a homeomorphism of $X$ and
    $Y$ that is $\U{}$-close to~$f$.
  \item[\scriptsize\ans{\U{}}] For each at most $n$-dimensional metric
    space $B$, its closed subset $A$ and maps $\varphi$ and $\psi$
    such that the following diagram commutes
    \[
    \xymatrix{
      X \ar^{f}[r] & Y \\
      A \ar@^{(->}^{i}[r] \ar^{\varphi}[u] & B \ar_{\psi}[u]
      \ar@^{.>}_{
        k}[ul]\\
    }
    \]
    there exists a map $k \colon B \to X$ such that $k|A = \varphi$
    and $f \circ k$ is $\U{}$-close to $\psi$.
  \item[\scriptsize\fne{\U{}}] There exists a map $g$ from $Y$ to $X$
    such that $f \circ g$ is $\U{}$-$n$-homotopic\footnote{see
      section~\ref{S:preliminaries} for definitions.} to the identity
    on $Y$ and $g \circ f$ is $f^{-1}(\U{})$-$n$-homotopic to the
    identity on $X$ (with $f^{-1}(\U{})$ denoting $\{ f^{-1}(U) \}_{U
      \in \U{}}$).
  \item[\scriptsize\uv{\U{}}] The star of the image of $f$ in $\U{}$
    is equal to $Y$ and there exists an open cover $\W{}$ of $Y$ such
    that for each $W$ in $\W{}$ there exists $U$ in $\U{}$ such that
    the inclusion $f^{-1}(W) \subset f^{-1}(U)$ induces trivial (zero)
    homomorphisms on homotopy groups of dimensions less than~$n$,
    regardless of the choice of the base point.
  \end{enumerate}\em
\end{defin}

Our main result is the following theorem.
\begin{thm}\label{T:fne_nh}
  For each open cover $\U{}$ of an $n$-dimensional N\"obeling manifold
  $Y$ there exists an open cover $\V{}$ such that for each map $f$
  from an $n$-dimensional N\"obeling manifold into $Y$, if \fne{\V{}},
  then \nh{\U{}}.
\end{thm}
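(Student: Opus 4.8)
The plan is to prove the theorem by climbing the chain of implications \fne{} $\Rightarrow$ \uv{} $\Rightarrow$ \ans{} $\Rightarrow$ \nh{}, each implication understood as holding after passing to a suitable refinement of the cover that depends only on $Y$ and on the given cover, never on $f$ nor on its domain (this uniformity is exactly what the quantifier structure of the statement demands). The first two implications are homotopy-theoretic and comparatively soft; essentially all of the topological weight sits in the final implication, which is where the manifold structure of the domain and, above all, the $Z$-set unknotting theorem enter.

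For \fne{} $\Rightarrow$ \uv{} I would use the homotopy inverse $g$ and the two controlled $n$-homotopies directly. Surjectivity up to $\U{}$ (the star-of-image condition) is immediate because $f\circ g$ is $\U{}$-$n$-homotopic to the identity of $Y$. To produce the refinement $\W{}$ witnessing triviality of $\pi_k$ for $k<n$, take a sphere $\alpha\colon S^k\to f^{-1}(W)$ with $k<n$; its image $f\circ\alpha$ lies in the small set $W$ and therefore contracts in a slightly larger $U$, since $\nu^n$, and hence $Y$, is locally $(n-1)$-connected. Transporting this null-homotopy back through $g$ and splicing it together with the two fine $n$-homotopies contracts $\alpha$ itself inside $f^{-1}(U)$ for an appropriate $U\in\U{}$; as $\alpha$ and its base point were arbitrary, the inclusion-induced maps on $\pi_k$ vanish, as required.

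The step \uv{} $\Rightarrow$ \ans{} is obstruction theory carried out below the critical dimension. Given the diagram of \ans{} with $\dim B\le n$, a closed subset $A\subset B$, and maps $\varphi,\psi$, I would fix a refinement $\W{}$ as provided by \uv{} and triangulate $B$ so finely (keeping $A$ a subcomplex) that $\psi$ carries each simplex into a single member of $\W{}$. The lift $k$ extending $\varphi$ is then built skeleton by skeleton over the simplices outside $A$: to extend across a $j$-simplex one must contract the already-defined image of its boundary $(j-1)$-sphere, and since $j\le n$ this sphere lies in some $f^{-1}(W)$ and contracts in the corresponding $f^{-1}(U)$ precisely because \uv{} makes the inclusion-induced homomorphism on $\pi_{j-1}$ vanish. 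Subordinating the triangulation to $\W{}$, and realizing the successive contractions in general position inside $\nu^n$, keeps $f\circ k$ uniformly $\U{}$-close to $\psi$.

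The principal obstacle is \ans{} $\Rightarrow$ \nh{}. Starting from an approximate lift $g\colon Y\to X$ of $\mathrm{id}_Y$ furnished by \ans{}, I would run a back-and-forth limiting construction in which $f$ and $g$ are alternately corrected on finer and finer scales. At each stage the relevant restrictions are $Z$-embeddings that are close and joined by a small $n$-homotopy, so the $Z$-set unknotting theorem yields an ambient homeomorphism, as close to the identity as prescribed, aligning them; composing these corrections produces a Cauchy sequence of homeomorphisms. The heart of the proof, and its most delicate point, is the bookkeeping of a rapidly shrinking sequence of covers $\U{}\succ\V{1}\succ\V{2}\succ\cdots$ chosen so that (i) the hypotheses of $Z$-set unknotting --- closeness together with the existence of a controlled $n$-homotopy between the two embeddings --- are verified at every stage, and (ii) the infinite composition of ambient corrections converges uniformly to a map whose inverse is also continuous, so that the limit is a genuine homeomorphism $\U{}$-close to $f$ rather than merely a continuous surjection. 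Controlling this convergence, and not any single geometric correction, is what makes the argument succeed.
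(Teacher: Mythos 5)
Your route breaks at its middle link. You claim each implication in the chain \fne{} $\Rightarrow$ \uv{} $\Rightarrow$ \ans{} $\Rightarrow$ \nh{} holds ``after passing to a suitable refinement of the cover that depends only on $Y$,'' but the controlled implication \uv{\V{}} $\Rightarrow$ \ans{\U{}} is false, and the paper itself exhibits the counterexample (example~\ref{E:uv}): there are $Y$ and $\U{}$ (modifiable so that $X$ and $Y$ are $n$-dimensional N\"obeling manifolds) such that for \emph{every} $\V{}$ some map satisfies \uv{\V{}} yet satisfies neither \fne{\U{}} nor \ans{\U{}}. Passing from \fne{} down to \uv{} throws away the two-sided control coming from the homotopy inverse, and at the level of a single fixed cover that information cannot be recovered; only the absolute implication \uv{} $\Rightarrow$ \fne{} (lemma~\ref{L:uv}) survives, and absolute statements are of no use here because the hypothesis of the theorem is \fne{\V{}} for one cover $\V{}$, not fine $n$-homotopy equivalence. (The controlled implication \fne{\V{}} $\Rightarrow$ \ans{\U{}} is true, but in the paper it is a \emph{consequence} of the theorem you are trying to prove, via theorem~\ref{T:relative nearh}, so invoking it would be circular.)

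The final step has the same single-scale problem in a different guise. Your back-and-forth construction corrects $f$ ``on finer and finer scales,'' so at stage $k$ it needs approximate lifts with control finer than $\V{k}$; that is exactly the absolute condition \ans{}, which the hypothesis does not supply --- after one use of \fne{\V{}} you have nothing to feed the second stage, and the Cauchy sequence never gets off the ground. This limiting scheme is essentially the paper's Section~4 argument (adjunction spaces, proposition~\ref{P:sigmazset}, $Z$-set unknotting, following Bestvina--Bowers--Mogilski--Walsh), and it proves only the absolute implication \uv{} $\Rightarrow$ \nh{}. The paper's proof of the theorem itself is structured precisely to avoid any limit: a single application of \fne{\V{}} is used to transplant a $q$-barycentric closed partition $\Q{}$ of $Y$ to a closed partition $\P{}$ of $X$ isomorphic to $\Q{}$ with $P_i \subset f^{-1}(\st^q_{\Q{}} Q_i)$ (via an anticanonical map, strong universality, swelling, and the regularity-improvement machinery of \cite{nagorkophd}), and then the rigidity lemma \cite[8.4]{nagorkophd} produces, from the isomorphism of partitions alone, a homeomorphism of $X$ onto $Y$ moving each $P_i$ only within $\st^m \Q{}$ of the corresponding element; this homeomorphism is automatically $\U{}$-close to $f$. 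That combinatorial rigidity of N\"obeling manifolds --- not convergence of successive corrections --- is what makes a one-scale hypothesis yield the conclusion, and it is the ingredient your proposal is missing.
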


Theorem~\ref{T:fne_nh} is an analogue of theorems of Ferry on Hilbert
space and Hilbert cube manifolds~\cite{ferry1977} and of a theorem of
Chapman and Ferry on euclidean manifolds~\cite{chapmanferry1979}.

Let (P$_\U{}$), (Q$_\U{}$) and (R$_\U{}$) be any of the predicates
stated in definition~\ref{D:relative}.  We are interested in which of
the implications
\[
\forall_\U{} \exists_\V{} \forall_f (\text{P}_\V{}) \Rightarrow
(\text{Q}_\U{})
\]
are true. We show that if $Y$ is an $ANE(n)$-space, then, with
quantifiers understood to be same as above, \nh{\V{}} $\Rightarrow$
\ans{\U{}} (lemma~\ref{L:nh}), \ans{\V{}} $\Rightarrow$ \fne{\U{}}
(lemma~\ref{L:ans}) and \fne{\V{}} $\Rightarrow$ \uv{\U{}}
(lemma~\ref{L:fne}). These implications are standard.  To complete the
picture, we give an example that shows that \uv{\V{}} does not imply
\fne{\U{}}, even if $X$ and $Y$ are N\"obeling manifolds
(example~\ref{E:uv}).

Observe that we have the following rule of inference
\[
\left( \forall_\U{} \exists_\V{} \forall_f (\text{P}_\V{}) \Rightarrow
  (\text{Q}_\U{}) \right) \wedge \left( \forall_\U{} \exists_\V{}
  \forall_f (\text{Q}_\V{}) \Rightarrow (\text{R}_\U{}) \right)
\Rightarrow \left( \forall_\U{} \exists_\V{} \forall_f (\text{P}_\V{})
  \Rightarrow (\text{R}_\U{}) \right).
\]

Hence the above mentioned implications yield the following theorem.

\begin{thm}\label{T:relative nearh}
  For each open cover $\U{}$ of an $n$-dimensional N\"obeling manifold
  $Y$ there exists an open cover $\V{}$ such that for each map $f$
  from an $n$-dimensional N\"obeling manifold $X$ into $Y$ if
  \begin{center}
    \nh{\V{}} or \ans{\V{}} or \fne{\V{}},
  \end{center}
  then
  \begin{center}
    \nh{\U{}} and \ans{\U{}} and \fne{\U{}}.
  \end{center}
\end{thm}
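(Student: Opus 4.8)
The plan is to exploit the cyclic structure of the three base implications together with the displayed rule of inference, and then, for a given $\U{}$, to assemble a single cover $\V{}$ by passing to a common refinement. Two elementary observations make this work. First, for fixed covers and a fixed $f$, a disjunction of hypotheses implies a conjunction of conclusions precisely when every disjunct implies every conjunct; since $\forall_f$ commutes with finite conjunction, it therefore suffices to produce, for the given $\U{}$, one cover $\V{}$ such that for every $f$ each of \nh{\V{}}, \ans{\V{}}, \fne{\V{}} implies each of \nh{\U{}}, \ans{\U{}}, \fne{\U{}}. Second, all three predicates are monotone under refinement: if $\V{}$ refines $\W{}$ then \nh{\V{}} $\Rightarrow$ \nh{\W{}}, \ans{\V{}} $\Rightarrow$ \ans{\W{}} and \fne{\V{}} $\Rightarrow$ \fne{\W{}}. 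Each of these is immediate from the definitions, since a $\V{}$-close map is $\W{}$-close and a $\V{}$-$n$-homotopy is a $\W{}$-$n$-homotopy (and likewise for $f^{-1}(\V{})$ and $f^{-1}(\W{})$) whenever $\V{}$ is finer than $\W{}$.

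Next I would establish, in the $\forall_\U{}\exists_\V{}\forall_f$ sense, every implication from one of the three predicates to another. Because $Y$ is a N\"obeling manifold it is an $ANE(n)$-space, so lemmas~\ref{L:nh} and~\ref{L:ans} give \nh{} $\Rightarrow$ \ans{} and \ans{} $\Rightarrow$ \fne{}, while theorem~\ref{T:fne_nh} gives \fne{} $\Rightarrow$ \nh{}. These three implications form a directed cycle \nh{} $\to$ \ans{} $\to$ \fne{} $\to$ \nh{}, so any predicate can be reached from any other along a directed path of length at most two. Applying the rule of inference once along each such path turns it into a single implication of the required quantified form, and the diagonal cases are trivial (take $\V{}$ to refine $\U{}$). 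Thus, for the fixed $\U{}$, each of the nine ordered pairs of predicates is witnessed by some open cover of $Y$.

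Finally I would combine these finitely many witnessing covers. Let $\V{}$ be their common refinement, which exists as the cover by all finite intersections. Fix any $f$ and suppose one of \nh{\V{}}, \ans{\V{}}, \fne{\V{}} holds, associated with a predicate $A$. Since $\V{}$ refines each witnessing cover, monotonicity shows that $A$ continues to hold at each of those coarser levels, and the corresponding witnessing implications then yield each of \nh{\U{}}, \ans{\U{}}, \fne{\U{}}. This is exactly the asserted implication. The genuine mathematical input here is theorem~\ref{T:fne_nh}, which is what closes the cycle; granting it, the present argument is purely formal. The one point that must be handled with care is the monotonicity observation, as it is precisely what allows the finitely many covers supplied by the individual implications to be replaced by a single common refinement $\V{}$.
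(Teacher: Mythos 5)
Your proposal is correct and follows essentially the same route as the paper: the paper derives this theorem by combining lemma~\ref{L:nh}, lemma~\ref{L:ans} and theorem~\ref{T:fne_nh} into the cycle \nh{} $\Rightarrow$ \ans{} $\Rightarrow$ \fne{} $\Rightarrow$ \nh{} and chaining them with the stated rule of inference. Your additional observations (monotonicity of the predicates under refinement and passing to a common refinement of the finitely many witnessing covers) merely make explicit the formal assembly step that the paper leaves implicit, and they are correct.
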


Now consider absolute versions of conditions stated in
definition~\ref{D:relative}.

\begin{defin}
  For each map $f$ from a space $X$ into a space $Y$ we say that \nh{}
  (\ans{}, \fne{} or \uv{} respectively) is satisfied, if for each
  open cover $\U{}$ of $Y$ \nh{\U{}} (\ans{\U{}}, \fne{\U{}} or
  \uv{\U{}} respectively) is satisfied.

  If a map satisfies \nh{}, then we say that it is a
  \emph{near-homeomorphism}. If it satisfies \ans{}, then we say that
  it is \emph{approximately $n$-soft}. If it satisfies \fne{}, then we
  say that it is a \emph{fine $n$-homotopy equivalence}. If it
  satisfies \uv{}, then we say that it is a \emph{UV$^{n-1}$-map}.
\end{defin}

We shall show that if $Y$ is an $ANE(n)$-space, then \uv{}
$\Rightarrow$ \fne{} (lemma~\ref{L:uv}, which contrasts
example~\ref{E:uv}). Hence we have

\begin{center}
  \nh{} $\Rightarrow$ \ans{} $\Rightarrow$ \fne{} $\Leftrightarrow$
  \uv{}.
\end{center}

The above implications combined with theorem~\ref{T:fne_nh} yield the
following theorem.
\begin{thm}\label{T:nearh}
  The following conditions are equivalent for each map $f \colon X \to Y$ of
  $n$-dimensional N\"obeling manifolds:
  \begin{enumerate}
  \item[\nh{}] $f$ is a near-homeomorphism,
  \item[\ans{}] $f$ is approximately $n$-soft,
  \item[\fne{}] $f$ is a fine $n$-homotopy equivalence,
  \item[\uv{}] $f$ is an $UV^{n-1}$-map.
  \end{enumerate}
\end{thm}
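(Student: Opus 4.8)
The plan is to obtain theorem~\ref{T:nearh} as a formal consequence of the relative implications already in hand, by passing from their ``$\forall_\U{}\exists_\V{}$'' form to implications between the four \emph{absolute} predicates. The one fact I must secure at the outset is that the target $Y$ is an $ANE(n)$-space, since this is the standing hypothesis of lemmas~\ref{L:nh}, \ref{L:ans}, \ref{L:fne} and~\ref{L:uv}. This holds because $Y$ is by definition locally homeomorphic to $\nu^n$, the space $\nu^n$ is an $ANE(n)$-space, and the class of $ANE(n)$-spaces is local for metrizable spaces; so a metrizable space locally modeled on $\nu^n$ is itself $ANE(n)$. Moreover theorem~\ref{T:fne_nh} applies to $f$ directly, as both $X$ and $Y$ are $n$-dimensional N\"obeling manifolds.

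Next I isolate the elementary reduction that drives everything. Suppose $\text{P}$ and $\text{Q}$ are predicates from definition~\ref{D:relative} for which the relative implication $\forall_\U{}\exists_\V{}\forall_f\,(\text{P}_\V{})\Rightarrow(\text{Q}_\U{})$ has been established. If $f$ satisfies the absolute predicate $\text{P}$, then $f$ satisfies the absolute predicate $\text{Q}$: given an arbitrary open cover $\U{}$ of $Y$, take the cover $\V{}$ furnished by the relative implication; the hypothesis $(\text{P}_\V{})$ holds because $\text{P}$ is assumed for \emph{every} cover, whence $(\text{Q}_\U{})$ follows, and since $\U{}$ was arbitrary $\text{Q}$ holds absolutely. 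The same argument applies verbatim when the hypothesis is a disjunction and the conclusion a conjunction of relative predicates, which is the form of theorem~\ref{T:relative nearh}.

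I would then feed the relative results into this reduction. Applying it to theorem~\ref{T:relative nearh} shows at once that if $f$ satisfies any one of \nh{}, \ans{} or \fne{}, then for every $\U{}$ the chosen $\V{}$ makes the disjunctive hypothesis hold, forcing \nh{\U{}}, \ans{\U{}} and \fne{\U{}} simultaneously; hence these three absolute predicates are mutually equivalent. For the fourth predicate, lemma~\ref{L:fne} gives \fne{} $\Rightarrow$ \uv{} through the same reduction, while lemma~\ref{L:uv} supplies the converse \uv{} $\Rightarrow$ \fne{} directly, as an honestly absolute statement. Combining, \fne{} $\Leftrightarrow$ \uv{}, and therefore all four conditions coincide, which is the assertion of theorem~\ref{T:nearh}.

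The assembly itself is routine; the genuine weight of the theorem sits in its inputs, above all in theorem~\ref{T:fne_nh}, which supplies the single hard implication \fne{} $\Rightarrow$ \nh{} that closes the cycle. The one point demanding care is the asymmetry of the equivalence \fne{} $\Leftrightarrow$ \uv{}: the direction \uv{} $\Rightarrow$ \fne{} \emph{cannot} be produced by the reduction principle from a relative implication, since example~\ref{E:uv} shows that \uv{\V{}} fails to imply \fne{\U{}} even for N\"obeling manifolds. Thus one must keep track of which implications are available only in their absolute form and invoke lemma~\ref{L:uv}, rather than any relative statement, for this last step.
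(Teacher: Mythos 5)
Your proposal is correct and takes essentially the same route as the paper: the paper obtains theorem~\ref{T:nearh} exactly by promoting the relative implications (lemmas~\ref{L:nh}, \ref{L:ans}, \ref{L:fne} together with theorem~\ref{T:fne_nh}) to their absolute forms via the quantifier argument you spell out, and then closing the cycle with the genuinely absolute lemma~\ref{L:uv} for \uv{} $\Rightarrow$ \fne{}, which as you note cannot come from a relative statement because of example~\ref{E:uv}. Your explicit check that $Y$ is an $ANE(n)$-space (and, implicitly, at most $n$-dimensional, as lemmas~\ref{L:ans} and~\ref{L:uv} require) merely makes precise what the paper leaves tacit.
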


\section{Preliminaries}
\label{S:preliminaries}

\begin{defin}
  We say that a metric space $X$ is an \emph{absolute neighborhood
    extensor in dimension $n$} if it is a metric space and if every
  map into $X$ from a closed subset $A$ of an $n$-dimensional metric
  space extends over an open neighborhood of $A$. The class of
  absolute neighborhood extensors in dimension $n$ is denoted by
  $ANE(n)$ and its elements are called \emph{$ANE(n)$-spaces}.
\end{defin}

\begin{lem}\label{L:nh}
  For each open cover $\U{}$ of an $ANE(n)$-space $Y$ there exists an
  open cover $\V{}$ such that if a map into $Y$ satisfies \nh{\V{}},
  then it satisfies \ans{\U{}}.
\end{lem}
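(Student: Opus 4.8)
The plan is to turn the homeomorphism supplied by \nh{\V{}} into an \emph{exact} solution of the lifting problem in \ans{\U{}} by solving, instead, a controlled extension problem entirely inside the $ANE(n)$-space $Y$, and then transporting the answer back to $X$ by the inverse of the homeomorphism.

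First I fix $\V{}$. Using that $Y\in ANE(n)$, I choose $\V{}$ so that $Y$ enjoys the following controlled extension property relative to $(\U{},\V{})$: for every at most $n$-dimensional metric space $B$, every closed $A\subseteq B$, every map $\psi\colon B\to Y$, and every map $\alpha\colon A\to Y$ with $\alpha$ $\V{}$-close to $\psi|A$, there is an extension $\beta\colon B\to Y$ of $\alpha$ that is $\U{}$-close to $\psi$. This is the standard approximate strengthening of the defining extension property of $ANE(n)$; it is obtained from the partial-realization (nerve) characterization of $ANE(n)$ by realizing a sufficiently fine cover of $B$. I also shrink $\V{}$ so that it star-refines $\U{}$.

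Now let $f$ satisfy \nh{\V{}} and fix a homeomorphism $h\colon X\to Y$ that is $\V{}$-close to $f$. Given the commuting square of \ans{\U{}} with $\varphi\colon A\to X$, $\psi\colon B\to Y$ and $\dim B\le n$, set $\alpha:=h\circ\varphi\colon A\to Y$. On $A$ we have $\psi=f\circ\varphi$, and since $f$ is $\V{}$-close to $h$, the maps $\psi|A=f\circ\varphi$ and $\alpha=h\circ\varphi$ are $\V{}$-close. The controlled extension property then yields $\beta\colon B\to Y$ with $\beta|A=\alpha$ and $\beta$ $\U{}$-close to $\psi$. I define
\[
k:=h^{-1}\circ\beta\colon B\to X .
\]
Then $k|A=h^{-1}\circ h\circ\varphi=\varphi$ exactly, as required, and $f\circ k$ is $\V{}$-close to $h\circ k=\beta$, which in turn is $\U{}$-close to $\psi$; modulo the star-refinement chosen at the outset this gives $f\circ k$ $\U{}$-close to $\psi$, establishing \ans{\U{}}. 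Note that only $Y\in ANE(n)$ is used, matching the hypothesis.

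The crux is the controlled extension step, and two points in it deserve care. The first is dimension: a homotopy between the close maps $\alpha$ and $\psi|A$ would live on $A\times I$ of dimension up to $n+1$, beyond what $ANE(n)$ controls, so the extension must \emph{not} be produced through such a homotopy; instead the nerve of a fine cover of $B$ has dimension at most $n$, and realizing it uses only the $ANE(n)$ hypothesis, which is precisely why the construction stays within the available extensor range. The second is that \ans{\U{}} demands $k|A=\varphi$ \emph{exactly}, not merely approximately; in the extension of $\alpha$ this is secured by the usual device of choosing covers of $B$ that refine toward $A$, so that the realization converges to $\alpha$ along $A$ while remaining $\U{}$-close to $\psi$ off $A$. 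Matching the closeness constants through the single homeomorphism $h$, and absorbing the passage between $f$ and $h$ by the star-refinement, is the quantitative bookkeeping that dictates the choice of $\V{}$ in the first step.
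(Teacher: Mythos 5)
Your proposal is correct and takes essentially the same approach as the paper: both transport the lifting problem into $Y$ via the homeomorphism supplied by \nh{\V{}}, solve a controlled extension problem there using the $ANE(n)$ property (the paper cites \cite[Proposition 4.1.7]{chibook} for precisely your ``controlled extension property''), and pull the solution back by the inverse homeomorphism, giving $k|A=\varphi$ exactly. The only difference is cover bookkeeping: the paper takes the extension $\W{}$-close to $\psi$ with $\st \W{}$ refining $\U{}$ and $\V{}$ refining $\W{}$, which yields genuine $\U{}$-closeness of $f\circ k$ and $\psi$, whereas your chain as written only gives $\st \U{}$-closeness---harmless, since $\U{}$ is arbitrary and you can start from a star-refinement.
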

\begin{proof}
  Choose open covers $\V{}$ and $\W{}$ of $Y$ such that the star of
  $\W{}$ refines $\U{}$ and the following condition is
  satisfied~\cite[Proposition 4.1.7]{chibook} for each at most
  $n$-dimensional metric space $B$ and its closed subset $A$:

  \vspace{ 1mm}
  \begin{tabular}{rl}
    (*) &
    \begin{tabular}{p{0.8\textwidth}}\noindent\em
      If one of two $\mathcal{V}$-close maps of $A$ into $Y$ has an
      extension to $B$, then the other also has an extension to $B$ and we
      may assume that these extensions are $\mathcal{W}$-close.
    \end{tabular}
  \end{tabular}
  \vspace{ 1mm}

  Let $A$ be a closed subset of an at most $n$-dimensional metric
  space~$B$ and let maps $\varphi \colon A \to X$ and $\psi \colon B
  \to Y$ be such that $f\varphi = \psi |A$. By~\nh{\V{}}, there exists
  a homeomorphism $g \colon X \to Y$, which is $\mathcal{V}$-close to
  $f$.  By the above stated property of $\mathcal{V}$ there exists a
  $\mathcal{W}$-close to $\psi$ extension $h \colon B \to Y$ of the
  composition $g\varphi$. Let $k = g^{-1}h \colon B \to X$.  Clearly,
  $k|A = \varphi$ and $fk$ is $\mathcal{U}$-close to $\psi$.
\end{proof}

\begin{defin}
  Let $\U{}$ be an open cover of a space $Y$. We say that maps $f, g
  \colon X \to Y$ are \emph{$n$-homotopic} if for every map~$\varPhi$
  from a polyhedron of dimension less than $n$ into $X$, the
  compositions $f \circ \varPhi$ and $g \circ \varPhi$ are
  homotopic by a homotopy whose paths refine~$\U{}$.
\end{defin}

\begin{lem}\label{L:ans}
  For each open cover $\U{}$ of an at most $n$-dimensional $ANE(n)$-space~$Y$ there exists an
  open cover $\V{}$ such that if a map into $Y$ satisfies \ans{\V{}},
  then it satisfies \fne{\U{}}.
\end{lem}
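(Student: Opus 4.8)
The plan is to construct the homotopy inverse $g$ of $f$ directly from the lifting condition, exploiting the extra hypothesis that $Y$ is at most $n$-dimensional precisely in order to feed $Y$ itself into \ans{\V{}} with empty domain of definition. First I would fix a chain of covers. Let $\U{}'$ be a star-refinement of $\U{}$, so that $\st \U{}' \prec \U{}$; applying the controlled extension property (*) from the proof of lemma~\ref{L:nh} (that is, \cite[Proposition 4.1.7]{chibook}) to $\U{}'$ yields covers $\V{}_1$ and $\W{}$ with $\st \W{} \prec \U{}'$ for which (*) holds; and finally I let $\V{}$ be a common refinement of $\V{}_1$ and $\U{}'$, so $\V{} \prec \V{}_1$ and $\V{} \prec \U{}'$. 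The point of $\V{}_1$ is the standard $ANE(n)$ fact that any two $\V{}_1$-close maps $h_0, h_1 \colon P \to Y$ from a polyhedron $P$ with $\dim P < n$ are homotopic through paths refining $\U{}'$: one extends the two $\V{}_1$-close maps on $P \times \{0,1\}$ sending $(p,j) \mapsto h_0(p)$, respectively $(p,0)\mapsto h_0(p)$ and $(p,1)\mapsto h_1(p)$, over the at most $n$-dimensional space $P \times I$, the former as the constant homotopy and, by (*), the latter $\W{}$-closely to it, whence each of its paths stays inside $\st(\{h_0(p)\},\W{})$ and therefore inside a single member of $\U{}'$.

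Now I construct $g$. Since $Y$ is an at most $n$-dimensional metric space, I apply \ans{\V{}} with $B = Y$, $A = \emptyset$ and $\psi = \mathrm{id}_Y$; the commuting-triangle hypothesis is vacuous for $A = \emptyset$, and I obtain a map $g \colon Y \to X$ with $f \circ g$ being $\V{}$-close to $\mathrm{id}_Y$. This already settles the first half of \fne{\U{}}: the maps $f \circ g$ and $\mathrm{id}_Y$ of $Y$ into $Y$ are $\V{}$-close, hence $\V{}_1$-close, hence $\U{}'$-$n$-homotopic by the fact above, and therefore $\U{}$-$n$-homotopic since $\U{}' \prec \U{}$.

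The harder half is to show that $g \circ f$ is $f^{-1}(\U{})$-$n$-homotopic to $\mathrm{id}_X$, and this is where I expect the main obstacle to lie: a homotopy is easy to produce downstairs in $Y$, but it must be transported up to $X$ with control measured in $f^{-1}(\U{})$. Fix a map $\varPhi \colon P \to X$ with $\dim P < n$. Downstairs, $f \circ \varPhi$ and $(f \circ g) \circ (f \circ \varPhi)$ are $\V{}$-close, hence joined by a homotopy $\psi \colon P \times I \to Y$ whose paths refine $\U{}'$. I then apply \ans{\V{}} to $B = P \times I$, which is at most $n$-dimensional, to its closed subset $A = P \times \{0,1\}$, to the map $\varphi$ equal to $\varPhi$ on $P \times \{0\}$ and to $g \circ f \circ \varPhi$ on $P \times \{1\}$, and to this $\psi$; the compatibility $f \circ \varphi = \psi | A$ holds by construction. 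The resulting lift $k \colon P \times I \to X$ restricts to $\varPhi$ and to $g \circ f \circ \varPhi$ on the two ends, so it is a homotopy between them, and $f \circ k$ is $\V{}$-close to $\psi$.

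It remains to verify the path control, which is the crux of the bookkeeping. For a fixed $p$, the set $\psi(\{p\}\times I)$ lies in a single member $U' \in \U{}'$; since $f \circ k$ is $\V{}$-close to $\psi$ and $\V{} \prec \U{}'$, each point $f(k(p,t))$ lies in a member of $\U{}'$ meeting $U'$, so $f(k(\{p\}\times I)) \subseteq \st(U',\U{}')$, which is contained in a single member $U \in \U{}$ because $\st \U{}' \prec \U{}$. Hence $k(\{p\}\times I) \subseteq f^{-1}(U)$, so the paths of $k$ refine $f^{-1}(\U{})$; as $\varPhi$ was arbitrary, this establishes $g \circ f \simeq_n \mathrm{id}_X$ and with it \fne{\U{}}. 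The delicate feature throughout is to maintain the single nesting $\V{} \prec \V{}_1$, $\V{} \prec \U{}'$ and $\st \U{}' \prec \U{}$, so that the one cover $\V{}$ simultaneously drives the empty-domain application that yields $g$, the closeness that yields the homotopies downstairs, and the star estimate that pushes control into $f^{-1}(\U{})$.
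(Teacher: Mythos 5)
Your proposal is correct and follows essentially the same route as the paper's proof: obtain $g$ by applying \ans{\V{}} to $\mathrm{id}_Y$ with empty $A$ (which is exactly where the hypothesis $\dim Y \leq n$ enters), use the controlled extension property (*) to turn $\V{}$-closeness into finely controlled homotopies downstairs, and then lift the homotopy between $f \circ \varPhi$ and $f \circ g \circ f \circ \varPhi$ via the relative case of \ans{\V{}} on $P \times [0,1]$, with a star-refinement estimate giving the $f^{-1}(\U{})$-control. The only differences are cosmetic: you spell out the derivation of the ``close maps are finely $n$-homotopic'' fact from (*) and arrange the cover bookkeeping with $\st \U{}' \prec \U{}$ in place of the paper's single condition $\st_\V{} \st \W{} \prec \U{}$.
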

\begin{proof}
  Choose open covers $\V{}$ and $\W{}$ of $Y$ such that $\st_\V{} \st
  \W{}$ refines $\U{}$ and condition~(*) defined in the proof of
  lemma~\ref{L:nh} is satisfied.  By \ans{\V{}}, there exists a map $g
  \colon Y \to X$ such that $f \circ g$ is $\V{}$-close to the
  identity on $Y$. By (*), any two $\V{}$-close maps from an at most $n$-dimensional metric space 
  are $\st\W{}$-$n$-homotopic.  Hence $f \circ g$ is $\U{}$-$n$-homotopic to
  the identity on $Y$.  Let $k$ be a map into $X$ defined on an at
  most $(n-1)$-dimensional polyhedron $K$. Let $l = g \circ f \circ
  k$.  Since $f \circ g$ is $\V{}$-close to the identity on $Y$, $f
  \circ k$ is $\V{}$-close to $f \circ l$. By (*), there exists a
  $\st \W{}$-homotopy $H \colon K \times [0, 1] \to Y$ of $f \circ k$ and
  $f \circ l$. By \ans{\V{}}, this homotopy can be lifted to a
  homotopy of $k$ and $l$ in $Y$, whose composition with $f$ is
  $\V{}$-close to $H$. Since $\st_\V{} \st \W{}$ refines~$\U{}$, this
  composition is a $\U{}$-homotopy. Hence $H$ is a
  $f^{-1}(\U{})$-homotopy and $g \circ f$ is
  $f^{-1}(\U{})$-$n$-homotopic to the identity on $X$.
\end{proof}

\begin{lem}\label{L:fne}
  For each open cover $\U{}$ of an $ANE(n)$-space $Y$ there exists an
  open cover $\V{}$ such that if a map into $Y$ satisfies \fne{\V{}},
  then it satisfies \uv{\U{}}.
\end{lem}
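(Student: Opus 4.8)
The plan is to check the two clauses of \uv{\U{}} separately, obtaining the required cover $\V{}$ through a short chain of refinements in which the local connectivity of the $ANE(n)$-space $Y$ is the only non-formal ingredient. First I would fix a star-refinement $\U{1}$ of $\U{}$, then choose, by local $(n-1)$-connectivity of $Y$, a refinement $\W{}$ of $\U{1}$ such that for every $W \in \W{}$ there is a $U_1 \in \U{1}$ with $W \subset U_1$ for which each map of a sphere $S^k$ with $k < n$ into $W$ extends to a map of $D^{k+1}$ into $U_1$; finally I would let $\V{}$ be any refinement of $\U{1}$. The cover $\W{}$ will serve as the witness demanded by \uv{\U{}}. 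The star clause is then immediate: applying \fne{\V{}} to a one-point polyhedron shows that $f \circ g$ is pointwise $\V{}$-close to $\mathrm{id}_Y$, so each $y \in Y$ shares a member of $\V{}$ with $f(g(y)) \in \im f$, whence $y \in \st(\im f, \U{})$ and $\st(\im f, \U{}) = Y$.

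For the homotopy clause I would fix $W \in \W{}$, an integer $k < n$ and a map $\alpha \colon S^k \to f^{-1}(W)$, and exhibit an extension of $\alpha$ over $D^{k+1}$ inside $f^{-1}(U)$, where $U \in \U{}$ is chosen to contain $\st(U_1, \U{1})$. Since $f \circ \alpha$ sends $S^k$ into $W$, the choice of $\W{}$ yields an extension $\bar\beta \colon D^{k+1} \to U_1$ of $f \circ \alpha$. Pushing this disk into $X$ by $g$ gives $g \circ \bar\beta \colon D^{k+1} \to X$ with boundary $g \circ f \circ \alpha$ and image in $g(U_1)$; as $f \circ g$ is $\V{}$-close to the identity we get $f(g(U_1)) \subset \st(U_1, \V{}) \subset U$, so $g \circ \bar\beta$ lands in $f^{-1}(U)$. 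To attach $\alpha$ to the boundary $g \circ f \circ \alpha$ of this disk I would use that $S^k$ has dimension $k < n$, so the $n$-homotopy $g \circ f \simeq \mathrm{id}_X$ furnished by \fne{\V{}} provides a homotopy from $\alpha$ to $g \circ f \circ \alpha$ whose paths refine $f^{-1}(\V{})$; tracking images shows this homotopy lies in $f^{-1}(\st(W, \V{})) \subset f^{-1}(U)$. Concatenating this homotopy, viewed as a cylinder on $S^k$, with the disk $g \circ \bar\beta$ produces a map of $D^{k+1}$ into $f^{-1}(U)$ restricting to $\alpha$ on the boundary, which shows that the inclusion $f^{-1}(W) \subset f^{-1}(U)$ annihilates $\pi_k$ for every $k < n$ and every base point.

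The step I expect to require the most care is keeping every invocation of the homotopy hypothesis on polyhedra of dimension strictly below $n$: the contraction of $\alpha$ is legitimate because $\dim S^k = k < n$, but I must never subject the disk $D^{k+1}$, whose dimension can equal $n$, to an $n$-homotopy, and instead control its image solely through the pointwise $\V{}$-closeness of $f \circ g$ to the identity. Beyond this, the argument is the bookkeeping of the refinements above together with the fact that spheres of dimension below $n$ contract in controlled neighborhoods; this uniform local $(n-1)$-connectivity, which I would quote from the theory of $ANE(n)$-spaces, is precisely the place where the hypothesis $Y \in ANE(n)$ enters.
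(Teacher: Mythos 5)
Your proposal is correct and follows essentially the same approach as the paper's proof: both choose, via the local $(n-1)$-connectivity of $ANE(n)$-spaces, a refinement whose members have spheres of dimension $<n$ bounding inside members of a star-refinement of $\U{}$, and then contract a sphere $\alpha$ in $f^{-1}(W)$ by concatenating the $f^{-1}(\V{})$-controlled $n$-homotopy from $\alpha$ to $g\circ f\circ\alpha$ (legitimate because $\dim S^k<n$) with the image under $g$ of the null-homotopy of $f\circ\alpha$ downstairs, controlling all images through the $\V{}$-closeness of $f\circ g$ to the identity. The differences are only bookkeeping: the paper takes the homotopy-killing refinement itself as $\V{}$, so that it doubles as the witness cover for \uv{\U{}}, whereas you decouple the witness $\W{}$ from $\V{}$; you also verify the density clause explicitly, which the paper leaves implicit.
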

\begin{proof}
  Let $\W{}$ be an open cover of $Y$ whose star refines $\U{}$. By
  theorem \cite[2.1.12]{chibook}, there exists an open cover
  $\V{}$ of $Y$ such that for each $V$ in $\V{}$ there
  exists $W_V$ in~$\W{}$, for which the inclusion $V \subset W_V$
  induces trivial homomorphisms on homotopy groups of dimensions less
  than~$n$. Let $k < n$. Let $V$ in $\V{}$. Let $\varphi \colon S^k
  \to f^{-1}(V)$. We will show that $\varphi$ is null-homotopic in
  $f^{-1}(\st_\V{} W_V)$, which shall end the proof, as $\st_\V{} \W{}$ refines~$\U{}$. By \fne{\V{}},
  there exists a map $g \colon Y \to X$ such that $g \circ f$ is
  $f^{-1}(\V{})$-$n$-homotopic with the identity on $X$ and $f \circ
  g$ is $\V{}$-close to the identity on $Y$.  In particular, $\varphi$
  is homotopic with $g \circ f \circ \varphi$ in
  $st_{f^{-1}(\V{})} f^{-1}(V) \subset f^{-1}(\st_\V{} V) \subset f^{-1}(\st_\V{} W_V)$. By the assumptions, $f \circ \varphi$ is null-homotopic in $W_V$. Hence $g \circ f \circ \varphi$ is null-homotopic in $g(W_V) \subset f^{-1}(\st_\V{} W_V)$. We are done.
\end{proof}

\begin{ex}\label{E:uv}
  We show that there exists a space $Y$ and an open cover $\U{}$ of
  $Y$ such that for each open cover $\V{}$ of $Y$ there exists a map
  $f$ from a space $X$ into $Y$ such that \uv{\V{}} is satisfied, but
  both \fne{\U{}} and \ans{\U{}} are not. We give an example for $n >
  1$ and the map that we construct is onto $Y$. For $n = 1$ an example
  can also be constructed, but the map cannot have a dense image
  in~$Y$.

  Let $Y$ be the unit interval $[0, 1]$ and let $\U{} = \{ [0, 1] \}$
  be the trivial cover of $Y$. Let~$\V{}$ be an open cover of $Y$ and
  let $V$ be an element of $\V{}$ that contains $\frac{1}{2}$. Let
  $\varepsilon > 0$ such that $[\frac{1}{2} - \varepsilon, \frac{1}{2}
  + \varepsilon] \subset V$. Let $X = [0,1] \times [0,1] \setminus [
  \frac{1}{2} - \varepsilon, \frac{1}{2} + \varepsilon] \times \{
  \frac{1}{2} \}$. Let $f \colon X \to Y$ be a restriction to $X$ of
  the projection of $[0,1] \times [0,1]$ onto the first coordinate.
  We can verify that $f$ satisfies \uv{\V{}} from the definition,
  taking any $\W{}$ that refines $\V{}$ and whose mesh is smaller than
  $2\varepsilon$.  As $X$ is homotopy equivalent to a circle and~$Y$
  is contractible, $f$ does not induce a monomorphism on fundamental
  groups of $X$ and~$Y$.  This is easily seen to contradict both
  \fne{\U{}} and \ans{\U{}} for $n > 1$.

  It is easy to modify the above example is such a way that $X$ and
  $Y$ are $n$-dimensional N\"obeling manifolds.
\end{ex}

\begin{lem}\label{L:uv}
  If a map into an at most $n$-dimensional $ANE(n)$-space satisfies \uv{}, then it satisfies
  \fne{}.
\end{lem}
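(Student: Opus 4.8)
The plan is to obtain the map $g$ required by \fne{\U{}} as an approximate lift, through $f$, of a nerve retraction of $Y$, and then to verify the two $n$-homotopies by approximately lifting homotopies through $f$ as well. The technical heart is an approximate lifting property extracted from \uv{}: \emph{if $P$ is a polyhedron of dimension at most $n$, $Q \subseteq P$ a subpolyhedron, $\beta \colon P \to Y$ a map with a partial lift $\tilde\beta_Q \colon Q \to X$ satisfying $f \tilde\beta_Q = \beta|Q$, then for every open cover $\U{}$ of $Y$ there is a lift $\tilde\beta \colon P \to X$ extending $\tilde\beta_Q$ with $f\tilde\beta$ $\U{}$-close to $\beta$.} First I would prove this by triangulating $P$ so finely that $\beta$ carries each simplex into a single member of the cover $\W{}$ furnished by \uv{\U{}}, and then extending $\tilde\beta_Q$ over the skeleta of $P$ one dimension at a time. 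The condition that each inclusion $f^{-1}(W) \subseteq f^{-1}(U)$ induces trivial homomorphisms on homotopy groups in dimensions below $n$ is exactly what permits filling in a lift over a $k$-cell once its boundary is lifted, for every $k \le n$; the requirement $\st(\im f, \U{}) = Y$ handles the $0$-skeleton.

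Next I would construct $g$. Since $Y$ is an $ANE(n)$-space of dimension at most $n$, standard nerve theory provides, for a sufficiently fine cover, a polyhedron $N$ of dimension at most $n$, a canonical map $\alpha \colon Y \to N$, and a map $\beta \colon N \to Y$ such that $\beta\alpha$ is $n$-homotopic to the identity on $Y$, with control as small as desired. Applying the lifting property (with $Q = \varnothing$) to $\beta$ yields $\tilde\beta \colon N \to X$ with $f\tilde\beta$ close to $\beta$, and I would set $g = \tilde\beta\alpha$. Then $fg = (f\tilde\beta)\alpha$ is close to $\beta\alpha$; since any two sufficiently close maps from an at most $n$-dimensional space into an $ANE(n)$-space are $n$-homotopic (the mechanism of condition~(*) from the proof of Lemma~\ref{L:nh}), $fg$ is $\U{}$-$n$-homotopic to the identity on $Y$ once all covers are chosen fine enough.

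The remaining and hardest point is that $gf$ is $f^{-1}(\U{})$-$n$-homotopic to the identity on $X$. I would test this against an arbitrary map $\Phi \colon K \to X$ from a polyhedron $K$ of dimension less than $n$. Setting $\psi = f\Phi$ and using $fg \simeq_n \mathrm{id}_Y$, the maps $f\Phi$ and $fgf\Phi = (fg)(f\Phi)$ are joined by a $\U{}$-controlled homotopy $H \colon K \times [0,1] \to Y$ with $H_0 = f\Phi$ and $H_1 = f(gf\Phi)$, whose ends admit the exact lifts $\Phi$ and $gf\Phi$. Now I apply the \emph{relative} version of the lifting property to $H$ on the pair $\left( K \times [0,1],\, K \times \{0,1\} \right)$, whose total dimension is at most $n$, obtaining $\tilde H \colon K \times [0,1] \to X$ that extends these two end lifts and satisfies $f\tilde H$ close to $H$. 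Then $\tilde H$ is a homotopy from $\Phi$ to $gf\Phi$ whose tracks, composed with $f$, remain close to the $\U{}$-refining tracks of $H$.

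The main obstacle is precisely this last step: producing the relative approximate lift of the homotopy while keeping its tracks inside prescribed members of $f^{-1}(\U{})$. This forces one to coordinate a chain of covers $\V{} \prec \W{} \prec \U{}$ (with enough star-refinement built in) together with the triangulations, so that the dimension count $\dim(K \times [0,1]) \le n$ and the vanishing of homotopy below dimension $n$ mesh exactly, and so that a routine star-refinement bookkeeping converts the closeness of $f\tilde H$ to $H$ into the statement that the tracks of $\tilde H$ refine $f^{-1}(\U{})$. Once this is in place, $gf$ is $f^{-1}(\U{})$-$n$-homotopic to the identity, and together with the previous paragraph this establishes \fne{\U{}} for every $\U{}$, hence \fne{}.
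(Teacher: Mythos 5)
Your proposal is correct and takes essentially the same route as the paper: your approximate lifting property is precisely the paper's inductively proved condition $(P_k)$ (approximate $n$-softness over polyhedral pairs, established skeleton by skeleton from \uv{}), your nerve construction of $g = \tilde\beta\alpha$ matches the paper's $g = r \circ q$ obtained via an at most $n$-dimensional polyhedron from \cite[Theorem 2.1.12(vii)]{chibook}, and your verification of the two $n$-homotopies by lifting a controlled homotopy over the pair $\left( K \times [0,1], K \times \{0,1\} \right)$ is exactly how the paper finishes, following the proof of lemma~\ref{L:ans}.
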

\begin{proof}
  Let $f$ be a UV$^{n-1}$-map from a space $X$ into an $ANE(n)$-space
  $Y$. We will show by induction that for each $0 \leq k \leq n$, $f$
  satisfies \ans{} for polyhedral pairs $(A, B)$ such that $\dim A
  \setminus B \leq k$. Let $(P_k)$ denote the last condition.  For $k
  = 0$ the assertion is obvious, as \uv{} implies that $f$ has dense
  image in $Y$. Assume that $k > 0$.  Let $\U{}$ be an open cover of
  $Y$. By \uv{}, there exists an open cover $\W{}$ of $Y$ such that
  for each $W \in \W{}$ there exists $U_W \in \U{}$ such that the
  inclusion of $f^{-1}(W)$ into $f^{-1}(U_W)$ induces zero
  homomorphisms on homotopy groups of dimensions less than $n$. Let
  $\mathcal{S}$ be an open cover whose star refines $\W{}$. Let $B$ be
  a subpolyhedron of an at most $n$-dimensional polyhedron~$A$ such
  that $\dim A \setminus B \leq k$. Let maps $\varphi \colon B \to X$
  and $\psi \colon A \to Y$ be such that $f \circ \varphi =
  \psi_{|B}$.  Fix a triangulation of $A$ such that for each simplex
  $\delta$ of this triangulation $\psi(\delta) \subset S$ for some $S
  \in \mathcal{S}$.  By $(P_{k-1})$, we may extend $\varphi$ over the
  $(k-1)$-dimensional skeleton of $A$ to a map $k$ in such a way that
  $f \circ k$ is $\mathcal{S}$-close to $\psi$.  Consider an
  $k$-dimensional simplex $\delta$ of $A$.  Observe that $k$ maps
  boundary of $\delta$ into the inverse image $f^{-1}(W)$ of an
  element $W$ of $\W{}$.  Hence, $k$ extends over~$\delta$ to a map
  into the inverse image $f^{-1}(U_W)$.  Extend $k$ in this manner
  over all $k$-dimensional simplexes of $A \setminus B$ and observe
  that $f \circ k$ is $\U{}$-close to $\varphi$. This completes the
  inductive step and a proof that $(P_n)$ holds for $f$.

  Let $\U{}$ be an open cover of $Y$. We will show that \fne{\U{}} is
  satisfied. Choose open covers $\V{}$ and $\W{}$ of $Y$ such that
  $\st_\V{} \st \W{}$ refines $\U{}$ and condition (*) defined in the
  proof of lemma~\ref{L:nh} is satisfied. By
  \cite[Theorem 2.1.12(vii)]{chibook}, there exist an at most
  $n$-dimensional polyhedron $A$ and two maps $q \colon Y \to A$, $p\colon A \to Y$
  such that $p \circ q$ is $\V{}$-close to the identity on $Y$. By
  $(P_n)$, there exists a map $r \colon K \to X$ such that $f \circ r$
  is $\V{}$-close to $p$. Let $g = r \circ q$.  By the construction,
  $f \circ g$ is $\st \V{}$-close to the identity on $Y$. The rest of
  the proof follows the proof of lemma~\ref{L:ans}.
\end{proof}

%%%%%%%%%%%%%%%%%%%%%%%%%%%%%%%%%%%%%%%

\section{Proof that \fne{\V{}} implies \nh{\U{}}}
\label{S:hard implication}

For definitions of notions used throughout the proof we refer the
reader to~\cite{nagorkophd}. Let~$\U{}$ be an open cover of an
$n$-dimensional N\"obeling manifold~$Y$. Let~$f$ be a map from an
$n$-dimensional N\"obeling manifold~$X$ into~$Y$. Assume that~$q$ is
an integer greater that a constant~$m$ obtained by
lemma~\cite[8.4]{nagorkophd} applied to~$n$. Additionally assume that
$q$ is greater than $36(5N+8)^{n-1} + 3$, where~$N$ is a
constant obtained by theorem~\cite[6.7]{nagorkophd}.  By
lemma~\cite[8.1]{nagorkophd}, there exists a closed partition $\Q{} =
\{ Q_i \}_{i \in I}$ of~$Y$ that is $q$-barycentric and whose $q$th
star refines~$\U{}$. Observe that if $\P{} = \{ P_i \}_{i \in I}$ is a
closed partition of~$X$ that is isomorphic to~$\Q{}$, then by
lemma~\cite[8.4]{nagorkophd}, there exists a homeomorphism~$h$ of~$X$
and~$Y$ that maps elements of~$\P{}$ into the corresponding elements
of $\st^m \Q{}$. If $P_i \subset f^{-1}(\st^q_\Q{} Q_i)$ for each~$i$
in~$I$, then~$h$ is $\U{}$-close to~$f$. In this case $h$ is a homeomorphism that we are looking for. Let~$\V{}$ be an open cover
of~$Y$ whose star refines~$\Q{}$.  Assume that~$f$ satisfies
\fne{\V{}}. We will show that there exists a closed partition~$\P{}$
of~$X$ satisfying the above stated conditions. This shall end the
proof.

Our first goal is to construct an $n$-semiregular closed interior $\N{n}$-cover of $X$ that is isomorphic to $\Q{}$ and such that $f$ maps its elements into $l$th stars of the corresponding elements of $\Q{}$, for some constant $l$ (we obtain $l = 9$, but the exact value is of no importance). 
By proposition~\cite[6.4]{nagorkophd}, $\Q{}$ is $n$-semiregular.
Hence there exists an anticanonical map $\lambda$ of $\Q{}$ that is an
$n$-homotopy equivalence. By \fne{\V{}}, there exists a map $Y \to X$
whose composition with $f$ is $\V{}$-close to the identity. Hence
there exists a map $\hat \lambda$ into $X$ whose composition with $f$
is $\V{}$-close to $\lambda$.  Since $X$ is strongly universal in
dimension $n$, $\hat \lambda$ can be approximated by a closed
embedding $\Lambda$ whose composition with $f$ is $\st \V{}$-close to
$\lambda$. By the choice of $\V{}$, $\lambda$ is $\Q{}$-close to $f
\circ \Lambda$.  Hence the composition $\lambda \circ \Lambda^{-1}$ is
$\Q{}$-close to the restriction of $f$ to $\im \Lambda$. Hence
$\lambda(\Lambda^{-1}(f^{-1}(Q_i))) \subset \st_\Q{} Q_i$ for each $i$
in $I$. By lemma~\cite[6.16]{nagorkophd}, there exists an extension of
$\lambda \circ \Lambda^{-1}$ to a map $g$ from $X$ to $Y$ such that
$g(f^{-1}(Q_i)) \subset \st^7_\Q{} Q_i$ for each $i$ in $I$. This
implies that $g$ is $\st^7 \Q{}$-close to $f$ and that $g^{-1}(Q_i)
\subset f^{-1}(\st^8_\Q{} Q_i)$ for each $i$ in $I$.  Let $R_i =
g^{-1}(Q_i)$ for each $i$ in $I$. By the construction, $\R{} = \{ R_i
\}_{i \in I}$ is isomorphic to $\Q{}$. By
theorem~\cite[4.5]{nagorkophd}, there exists a closed interior
$\N{n}$-cover $\P{0} = \{ P^0_i \}_{i \in I}$ of $X$ that is a
swelling of $\R{}$. By taking a small enough swelling, we may require that $P^0_i \subset
f^{-1}(\st^9_\Q{} Q_i)$ for each $i \in I$. By the construction,
$\Lambda$ is an anticanonical map of $\P{0}$. By
lemma~\cite[2.20]{nagorkophd} and by corollary~\cite[5.1]{nagorkophd},
the composition $f \circ \Lambda$ is an $n$-homotopy equivalence,
since $f \circ \Lambda$ is $Q{}$-close to $\lambda$ and $\lambda$ is
an $n$-homotopy equivalence. Since $f$ is $n$-homotopy equivalence,
$\Lambda$ is $n$-homotopy equivalence and by the definition, $\P{0}$
is $n$-semiregular.

Hence we constructed a cover $\P{0}$ that satisfies the following condition.
\begin{enumerate}
\item[($0$)] $\P{0} = \{ P^0_i \}_{i \in I}$ is a closed star finite
  $0$-regular $n$-semiregular interior $\N{n}$-cover of $X$ that is
  isomorphic to $\Q{}$ and such that $P^0_i \subset f^{-1}(\st^9_\Q{}
  Q_i)$ for each $i$ in $I$.
\end{enumerate}

Our second goal is a construction of a sequence $\P{1}, \P{2}, \ldots, \P{n}$ of covers
of $X$ such that for each $0 < k \leq n$ the following condition is
satisfied.
\begin{enumerate}
\item[($k$)] $\P{k} = \{ P^k_i \}_{i \in I}$ is a closed star finite
  $k$-regular $n$-semiregular interior $\N{n}$-cover of $X$ that is
  isomorphic to $\Q{}$ and such that $P^k_i \subset
  f^{-1}(\st^{9(5N + 8)^k}_\Q{} Q_i)$ for
  each $i$ in $I$.
\end{enumerate}

Observe that $\P{} = \P{n}$ satisfies conditions stated at the
beginning of the proof. Hence a construction of such a sequence shall
finish the proof. 

Let $\F{}$ be a cover of $Y$ that refines $\st^l \Q{}$ for some positive integer $l < q$. Let $p$ be a positive integer such that $2^{p-2} - 1 < l \leq 2^{p-1} - 1$. By the assumption that $q$ is big enough, $\Q{}$ is $p$-barycentric. Hence by lemma~\cite[6.12]{nagorkophd} and lemma~\cite[6.15]{nagorkophd}, $\st^{2^{p-1} -1} \Q{}$ is $n$-contractible in $\st^{2^p - 1} \Q{}$. As $2^p - 1 < 4l+3$, $\F{}$ is $n$-contractible in $\st^{4l+3} \Q{}$. By \fne{\V{}}, if a cover $\F{}$ is $n$-contractible in a cover $\st^{4l+3} \Q{}$, then $f^{-1}(\F{})$ is $n$-contractible in $f^{-1}(\st_\V{} \st^{4l+3} \Q{}) \prec f^{-1}(\st^{4(l+1)} \Q{})$. Hence by theorem~\cite[6.7]{nagorkophd} applied to $\P{k}$, there exists a closed $k$-regular $n$-semiregular interior $\N{n}$-cover $\P{k}$ that is isomorphic to
$\P{k-1}$ and that refines $\st^N f^{-1}(\st^{4(9(5N + 8)^{k-1} + 1)} \Q{})$. By remark~\cite[3.5]{nagorkophd}, we may
require that $\P{k}$ is equal to $\P{k-1}$ on the image of $\Lambda$.
This implies that $P^{k}_i \subset \st^{N+1}
f^{-1}(\st^{4(9(5N + 8)^{k-1} + 1)} \Q{})$, hence $P^{k}_i \subset f^{-1}(\st^{N+1} \st^{4(9(5N + 8)^{k-1} + 1)} \Q{})$.  By lemma~\cite[2.1]{nagorkophd}, $\st^{N+1} \st^{4(9(5n+8)^{k-1} + 1)} \Q{} = \st^{(N+2)(4(9(5N+8)^{k-1}) + N + 1} \Q{}$ and clearly the latter exponent is not greater than $9(5N + 8)^k$. We are done.

\section{An alternative proof that \uv{} implies \nh{}}

It is known (see \cite[Proposition 5.7.4]{chibook}) that every
$n$-dimensional Menger manifold $M$ has the pseudo-interior
$\nu^{n}(M)$.

\begin{lem}\label{L:mpi}
  The class of $n$-dimensional N\"{o}beling manifolds coincides with
  the class of pseudo-interiors of $n$-dimensional Menger manifolds.
\end{lem}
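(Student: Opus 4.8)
The plan is to prove the asserted equality of classes by two separate inclusions, throughout using the model fact that the Nöbeling space is exactly the pseudo-interior of the Menger compactum $\mu^n$, i.e.\ $\nu^n = \nu^n(\mu^n)$, together with the capset description of the pseudo-boundary supplied by \cite[Proposition 5.7.4]{chibook}.

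First I would treat the inclusion stating that the pseudo-interior of an $n$-dimensional Menger manifold $M$ is an $n$-dimensional Nöbeling manifold. The essential point is that the pseudo-interior is a \emph{local} notion: the pseudo-boundary $B(M) = M \setminus \nu^n(M)$ is an intrinsically defined $\sigma Z$-set (a capset for the $n$-dimensional compacta), so for each chart $U$ of $M$ homeomorphic to an open subset of $\mu^n$ one has $\nu^n(M) \cap U = \nu^n(U) = U \cap \nu^n$, an open subset of the Nöbeling space. Since open subsets of $\nu^n$ are themselves Nöbeling manifolds and the charts cover $M$, the space $\nu^n(M)$ is locally homeomorphic to $\nu^n$ and hence is a Nöbeling manifold. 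I would isolate the locality statement --- that forming the pseudo-interior commutes with passing to charts --- as the single technical point here, deducing it from the capset characterization of $B(M)$.

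The substantial direction is the converse: every $n$-dimensional Nöbeling manifold $N$ arises as $\nu^n(M)$ for some $n$-dimensional Menger manifold $M$. Cover $N$ by charts $V_\alpha$, each homeomorphic to an open subset of $\nu^n$, and let $W_\alpha \subseteq \mu^n$ be the open set with $W_\alpha \cap \nu^n = V_\alpha$, so that $V_\alpha = \nu^n(W_\alpha)$. The aim is to assemble the local Menger completions $W_\alpha$ into a single Menger manifold $M$ with pseudo-interior $N$. To this end I would extend each transition homeomorphism $\varphi_{\alpha\beta}$ of $N$ --- a homeomorphism between open subsets of the pseudo-interiors --- to a homeomorphism $\Phi_{\alpha\beta}$ between open subsets of the $W_\alpha$, using the $Z$-set unknotting theorem for Menger manifolds to carry the pseudo-boundary of one chart onto that of the other. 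Arranging the $\Phi_{\alpha\beta}$ to satisfy the cocycle condition and gluing yields a separable metric space $M$ containing $N$; one then verifies that $M$ is an $n$-dimensional Menger manifold by checking Bestvina's characterization axioms locally, and that $M \setminus N$ is a capset, whence $\nu^n(M) = N$.

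I expect the main obstacle to be precisely this global construction: coherently extending the transition functions across the pseudo-boundaries so that the cocycle condition holds and the glued object is a genuine locally compact, metrizable Menger manifold rather than a mere patchwork of charts. The $Z$-set unknotting theorem together with the uniqueness of capsets up to ambient homeomorphism should supply the needed coherence and reduce the identity $\nu^n(M) = N$ to checking, chart by chart, that the added set $M \setminus N$ is the pseudo-boundary, i.e.\ a capset. An alternative to explicit gluing would be to invoke the characterization theorems for both manifold classes and establish existence and uniqueness of the Menger completion $M$ by matching axioms; in either approach the content lies in producing the global Menger completion of $N$.
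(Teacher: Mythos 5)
Your first inclusion (the pseudo-interior of an $n$-dimensional Menger manifold is an $n$-dimensional N\"obeling manifold) is sound and matches what the paper gets by citing \cite[Proposition 5.7.5]{chibook}: the pseudo-boundary is a capset, forming the pseudo-interior commutes with restriction to charts, and open subsets of $\nu^n$ are N\"obeling manifolds.

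The genuine gap is in your converse direction. The chart-by-chart gluing you propose --- extending each transition homeomorphism $\varphi_{\alpha\beta}$ over the Menger charts $W_\alpha$ by $Z$-set unknotting and then ``arranging the $\Phi_{\alpha\beta}$ to satisfy the cocycle condition'' --- is exactly the step that cannot be carried out as described. $Z$-set unknotting and uniqueness of capsets produce extensions one pair at a time, with no canonical choice; on a triple overlap the homeomorphisms $\Phi_{\alpha\gamma}$ and $\Phi_{\beta\gamma}\Phi_{\alpha\beta}$ are forced to agree only on the dense pseudo-interior part, not on the added pseudo-boundary points, and nothing in the unknotting theorem makes them agree there. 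You correctly identify this coherence problem as the main obstacle, but you offer no mechanism to resolve it, so the proof is incomplete precisely where its content should be. The paper sidesteps the issue entirely: by the open embedding theorem for N\"obeling manifolds (\cite{levin}, \cite{nagorkophd}), every $n$-dimensional N\"obeling manifold $N$ is homeomorphic to an open subset of $\nu^n$; taking the open subset $W \subseteq \mu^n$ with $W \cap \nu^n = N$ then yields a single global Menger chart, i.e.\ an $n$-dimensional Menger manifold whose pseudo-interior is $N$, with no gluing and no cocycle condition at all. Since the open embedding theorem is one of the established pillars of the theory (it is cited in the introduction of this very paper), the ``substantial direction'' you isolate is in fact an immediate consequence of it; your proposal in effect tries to re-derive a weak form of that theorem by hand, which is where the unresolved difficulty sits.
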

\begin{proof}
  Apply \cite[Proposition 5.7.5]{chibook} and the open embedding
  theorem for N\"{o}beling manifolds \cite{levin}, \cite{nagorkophd}.
\end{proof}

Next we single out one of the main particular cases in which
near-ho\-meo\-mor\-phisms appear naturally.

\begin{pro}\label{P:inclusion}
  Let $A$ be a $\sigma Z$-set in a N\"{o}beling manifold $N$. Then the
  inclusion $N \setminus A \hookrightarrow N$ is a near-homeomorphism.
\end{pro}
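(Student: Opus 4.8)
The plan is to verify that $N \setminus A$ is itself an $n$-dimensional N\"obeling manifold and then to check that the inclusion satisfies one of the equivalent conditions of Theorem~\ref{T:nearh}; the most convenient is \ans{}, since the task of extending a map off a $\sigma Z$-set is exactly a push-off problem. Theorem~\ref{T:nearh} then upgrades approximate $n$-softness to a near-homeomorphism.

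First I would check that $N \setminus A$ is an $n$-dimensional N\"obeling manifold. Since a $\sigma Z$-set is a countable union of closed $Z$-sets, $A$ is $F_\sigma$, so $N \setminus A$ is a $G_\delta$ subset of the Polish space $N$ and hence Polish. Locally $N$ is homeomorphic to $\nu^n$ and $A$ meets each such chart in a $\sigma Z$-set; by the absorption properties of $\sigma Z$-sets in $\nu^n$ the complement of a $\sigma Z$-set in $\nu^n$ is again homeomorphic to $\nu^n$, so $N \setminus A$ is locally homeomorphic to $\nu^n$ and is therefore an $n$-dimensional N\"obeling manifold. One may alternatively route this through Lemma~\ref{L:mpi}, passing to the ambient Menger manifold and its pseudo-interior.

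Next I would establish \ans{} for the inclusion $i \colon N \setminus A \hookrightarrow N$. Fix an open cover $\U{}$ of $N$, an at most $n$-dimensional metric space $B$ with closed subset $C$, and maps $\varphi \colon C \to N \setminus A$ and $\psi \colon B \to N$ with $i \circ \varphi = \psi|_C$. The goal is a map $k \colon B \to N \setminus A$ with $k|_C = \varphi$ and $i \circ k$ that is $\U{}$-close to $\psi$; equivalently, I must approximate $\psi$, rel $C$, by a map whose image misses $A$. Writing $A = \bigcup_j A_j$ with each $A_j$ a closed $Z$-set, I would push the image of $\psi$ successively off each $A_j$ by a small general-position adjustment that fixes $C$ (here $\dim B \le n$ is exactly what makes each single push-off available from the $Z$-set theory), and arrange the $j$-th correction to be small enough that the sequence converges to a limit map $k$. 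The limit lands in $N \setminus A$, remains fixed on $C$, and stays $\U{}$-close to $\psi$. This gives \ans{}. Since $i$ is then an approximately $n$-soft map between $n$-dimensional N\"obeling manifolds, Theorem~\ref{T:nearh} shows that $i$ is a near-homeomorphism.

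The main obstacle is the $\sigma Z$-set push-off in the middle step. Pushing a map of an at most $n$-dimensional space off a single $Z$-set is standard, but handling the countable union $A = \bigcup_j A_j$ requires an iterated Baire-category argument with carefully shrinking control, so that the successive corrections converge, the final map misses \emph{every} $A_j$ simultaneously, and it stays $\U{}$-close to $\psi$ while remaining fixed on $C$. Confirming that $N \setminus A$ is genuinely a N\"obeling manifold is the other nontrivial input, as it rests on the $Z$-set absorption/unknotting theory for $\nu^n$ rather than on anything proved earlier in this note.
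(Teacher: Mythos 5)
Your proof is correct in outline, but it takes a genuinely different route from the paper. The paper's proof is a two-line reduction: by Lemma~\ref{L:mpi} every $n$-dimensional N\"obeling manifold is the pseudo-interior of an $n$-dimensional Menger manifold, and \cite[Proposition 5.7.7]{chibook} already asserts that deleting a $\sigma Z$-set from such a pseudo-interior yields an inclusion that is a near-homeomorphism; nothing else is needed. You instead verify \ans{} for the inclusion directly --- pushing maps of at most $n$-dimensional spaces off the countably many $Z$-sets $A_j$ rel the prescribed closed set, with a Baire-category/shrinking-control argument --- check that $N \setminus A$ is again an $n$-dimensional N\"obeling manifold, and then invoke Theorem~\ref{T:nearh} to upgrade \ans{} to \nh{}. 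Both of your external inputs (the rel $Z$-set push-off, and the fact that the complement of a $\sigma Z$-set in $\nu^n$ is again $\nu^n$) are standard in the literature, and the second is in any case comparable in strength to the result the paper cites, so your argument is sound. What your route buys is transparency: it exhibits the mechanism (approximate $n$-softness via push-off) rather than outsourcing it, and avoids the Menger-manifold detour. What the paper's route buys, besides brevity, is logical independence from Section~\ref{S:hard implication}: Proposition~\ref{P:inclusion} feeds into Proposition~\ref{P:sigmazset} and thence into the ``alternative proof'' that \uv{} implies \nh{}, and the point of that section is to reprove the main implication without the partition machinery of Section~\ref{S:hard implication}. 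Your appeal to Theorem~\ref{T:nearh} --- whose proof \emph{is} the Section~\ref{S:hard implication} argument --- is not mathematically circular (that theorem is established before and independently of Section 4), but it would make the alternative proof depend on the original one, defeating the purpose the proposition serves in the paper.
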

\begin{proof}
  Apply Lemma \ref{L:mpi} and \cite[Proposition
  5.7.7]{chibook}.
\end{proof}

If for a map $f \colon X \to Y$ the image $f(X)$ is dense in $Y$ (as
is the case for approximately $n$-soft maps), then the set of
nondegenerate values of $f$, denoted by $N_{f}$, consists, by
definition, of three types of points of $Y$: points in $Y \setminus
f(X)$, points whose inverse images contain at least two points, and
points $y \in Y$ for which although the inverse image $f^{-1}(y)$ is a
singleton, the collection $f^{-1}({\mathcal B})$ does not form a local
base at $f^{-1}(y)$ for any local base ${\mathcal B}$ at $y$ in $Y$.
Note that $N_{f}$ is an $F_{\sigma}$-subset of $Y$ and that the
restriction $f|f^{-1}(Y\setminus N_{f}) \colon f^{-1}(Y\setminus
N_{f}) \to Y \setminus N_{f}$ is a homeomorphism.

Our next statement extends Proposition \ref{P:inclusion}.

\begin{pro}\label{P:sigmazset}
  Let $f \colon M\to N$ be an approximately $n$-soft map of
  $n$-dimensional N\"{o}beling manifolds. If $N_{f}$ is a $\sigma
  Z$-set, then $f$ is a near-homeomorphism.
\end{pro}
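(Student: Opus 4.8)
The plan is to reduce Proposition~\ref{P:sigmazset} to Proposition~\ref{P:inclusion} by splitting $f$ along the set of nondegenerate values. Write $M_0 = f^{-1}(N \setminus N_f)$ and $N_0 = N \setminus N_f$, and let $i \colon M_0 \hookrightarrow M$ and $j \colon N_0 \hookrightarrow N$ denote the inclusions. As recalled just before the statement, $N_f$ is an $F_\sigma$-set and the restriction $f_0 = f|M_0 \colon M_0 \to N_0$ is a homeomorphism; in particular $f \circ i = j \circ f_0$. Since $N_f$ is a $\sigma Z$-set, Proposition~\ref{P:inclusion} shows that $j$ is a near-homeomorphism, so in particular a homeomorphism $N_0 \to N$ exists. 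Hence $N_0$, and therefore $M_0 \cong N_0$, is an $n$-dimensional N\"obeling manifold, which is what makes the inclusions admissible inputs for Proposition~\ref{P:inclusion}.

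The heart of the argument is to prove that $f^{-1}(N_f) = M \setminus M_0$ is a $\sigma Z$-set in $M$. Since $N_f$ is simultaneously an $F_\sigma$-set and a $\sigma Z$-set, I would first write it as a countable union $N_f = \bigcup_k A_k$ of \emph{closed} $Z$-sets: decomposing the $F_\sigma$-set into closed pieces and intersecting each piece with the $Z$-sets furnished by the $\sigma Z$-structure makes every piece a closed subset of a $Z$-set, hence a $Z$-set. Then $f^{-1}(N_f) = \bigcup_k f^{-1}(A_k)$ is a countable union of closed sets, and it suffices to show that the preimage of each $Z$-set $A_k$ under the approximately $n$-soft map $f$ is a $Z$-set in $M$. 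I would establish this by pushing the identity of $M$ off $f^{-1}(A_k)$: using that $A_k$ is a $Z$-set in $N$ one pushes $N$ off $A_k$ by a self-map close to the identity, and using approximate $n$-softness (with domain the $n$-dimensional manifold $M$ and empty closed subset) one lifts the induced map $M \to N$ to a self-map of $M$ that is close to the identity. This is the \textbf{main obstacle}: approximate $n$-softness yields a lift whose composition with $f$ only \emph{approximates} a map avoiding $A_k$, rather than one that genuinely avoids it. To upgrade approximate avoidance to exact avoidance I would iterate the lifting with geometrically controlled error terms and pass to a limit, using completeness of the Polish space $M$ to keep the limiting self-map's image inside the open set $M \setminus f^{-1}(A_k)$; alternatively one invokes the precise $n$-dimensional $Z$-set machinery of~\cite{nagorkophd}.

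Granting that $f^{-1}(N_f)$ is a $\sigma Z$-set, Proposition~\ref{P:inclusion} applied to the N\"obeling manifold $M$ shows that $i \colon M_0 \hookrightarrow M$ is a near-homeomorphism, and $j \circ f_0 \colon M_0 \to N$ is a near-homeomorphism as a composition of the homeomorphism $f_0$ with the near-homeomorphism $j$. It remains to glue these. Given an open cover $\U{}$ of $N$, I would choose an open cover $\U{}'$ with $\st \U{}'$ refining $\U{}$, pick a homeomorphism $\alpha \colon M_0 \to M$ that is $f^{-1}(\U{}')$-close to $i$, and pick a homeomorphism $\beta \colon M_0 \to N$ that is $\U{}'$-close to $f_0 = f \circ i$. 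Then $h = \beta \circ \alpha^{-1} \colon M \to N$ is a homeomorphism. For $m \in M$ put $x = \alpha^{-1}(m)$: since $\alpha$ is $f^{-1}(\U{}')$-close to the inclusion, $f(m) = f(\alpha(x))$ and $f(x)$ lie in a common element of $\U{}'$, while $h(m) = \beta(x)$ and $f(x)$ lie in a common element of $\U{}'$. Hence $f(m)$ and $h(m)$ lie in a common element of $\st \U{}'$, so $h$ is $\U{}$-close to $f$. As $\U{}$ was arbitrary, $f$ is a near-homeomorphism, completing the proof.
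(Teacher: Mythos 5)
Your final gluing paragraph is essentially the paper's own concluding step, and it is correct: once both $f^{-1}(N_f)\hookrightarrow M$ and $N\setminus N_f\hookrightarrow N$ are known to be complements of $\sigma Z$-sets, composing approximating homeomorphisms with the homeomorphism $f_0$ does give a homeomorphism $\U{}$-close to $f$. The problem is everything before that. Your reduction rests on the claim that the preimage under $f$ of each closed $Z$-set $A_k\subseteq N_f$ is a $Z$-set in $M$, and you have no proof of it. The mechanism you propose --- push $N$ off $A_k$, then lift by approximate $n$-softness --- can never work, for the reason you yourself flag: approximate $n$-softness controls $f\circ k$ only up to an open cover, and no choice of cover converts ``close to a map missing $A_k$'' into ``missing $A_k$,'' since every neighborhood of a point of $A_k$ contains points of $A_k$. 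Iterating and passing to a limit does not repair this: each stage of the iteration again only approximately avoids $f^{-1}(A_k)$, and a uniform limit of such maps has no reason to land in the open set $M\setminus f^{-1}(A_k)$; completeness of $M$ gives convergence, not avoidance. Avoidance is an exact, not an approximate, property, and the only exact tools available are homeomorphisms.

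Moreover, the claim itself is almost certainly false, not merely unproved: in the parallel Hilbert-cube theory (the setting of \cite{bestvina}, which this proof imitates), collapsing an arc that is not a $Z$-set in $Q$ yields a near-homeomorphism (hence an approximately soft map) whose nondegeneracy set is a single point --- a $Z$-set --- while the preimage of that point is the non-$Z$ arc. This is exactly why the paper does \emph{not} attempt to show that $f^{-1}(N_f)$ is a $\sigma Z$-set. Instead it changes the map: using a dense family of disjoint embedded $n$-cubes $\alpha_k(I^n)$ in $M$, it approximates $f\alpha_k$ by $Z$-embeddings $\beta_k$ into $N\setminus N_{f_{k-1}}$, and then invokes the $Z$-set unknotting theorem to produce self-homeomorphisms $h_k$ of $M$ with the \emph{exact} identity $h_k\alpha_k = f_{k-1}^{-1}\beta_k$, setting $f_k=f_{k-1}h_k$. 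The limit $g=\lim f_k$ is an approximately $n$-soft map close to $f$ that is genuinely one-to-one over each $\beta_k$-cube, so $g^{-1}(N_g)$ misses the dense family $\bigcup_k\alpha_k(I^n)$; by the cube-testing characterization of $Z$-sets in $n$-dimensional N\"obeling manifolds, each closed piece of the $F_\sigma$-set $g^{-1}(N_g)$ is then a $Z$-set. Only at that point does your splitting-and-gluing argument apply (to $g$ rather than to $f$), and since $g$ approximates $f$, one concludes that $f$ is a near-homeomorphism. In short: the unknotting-based modification of $f$ is not an optional refinement but the core of the proof, and it is precisely the step your proposal is missing.
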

\begin{proof}
  We follow proof of \cite[Proposition 3.1]{bestvina}. Let $\{
  \alpha_{k} \colon k \in {\mathbb N}\} \subset C(I^{n},M)$ be a dense
  subset of embeddings of the $n$-dimensional cube into $M$ such that
  $\alpha_{i}(I^{n}) \cap \alpha_{j}(I^{n}) = \emptyset$ for each $i,j
  \in {\mathbb N}$ with $i \neq j$.

  The map $f_{0}\alpha_{1} \colon I^{n} \to N$ can be approximated by
  a $Z$-embedding $\beta_{1}\colon I^{n} \to N \setminus N_{f_{0}}$
  which is ${\mathcal U}-n$-homotopic to $f_{0}\alpha_{1}$ for a
  sufficiently small open cover ${\mathcal U}$ of $N$. Note that since
  $f_{0}\alpha_{1}$ and $f_{0}f_{0}^{-1}\beta_{1}$ are $n$-homotopic
  via a small "$n$-homotopy", the approximate $n$-softness of $f_{0}$
  implies that $\alpha_{1}$ and $f_{0}^{-1}\beta_{1}$ are
  $n$-homotopic in $M$ (via a small "$n$-homotopy", where smallness is
  measured in $N$). A version of $Z$-set unknotting theorem \cite[Theorem 2.2]{levin}
  produces a homeomorphism $h_{1}\colon M \to M$ such that
  $h_{1}\alpha_{1} = f_{0}^{-1}\beta_{1}$ and $f_{0}h_{1}$ is close to
  $f_{0}$. Let $f_{1} = f_{0}h_{1}$. Requiring additionally that
  $h_{1}$ is fixed outside of a small neighborhood of
  $f_{0}^{-1}(f_{0}(\alpha_{1}(I^{n})))$ we conclude that
  $f_{1}^{-1}(f_{1}(m)) = m$ for each $m \in \alpha_{1}(I^{n})$.
  Continuing in this manner we construct the sequence $f_{0} = f,
  f_{1}, \dots$ of approximately $n$-soft maps of $M$ into $N$ so that
  $f_{k+1} = f_{k}h_{k+1}$, where $h_{k+1} \colon M\to M$ is a
  homeomorphism fixed outside of a small neighborhood of
  $f_{k}^{-1}(f_{k}(\alpha_{k+1}(I^{n})))$ missing $\bigcup\{
  \alpha_{i}(I^{n}) \colon 1 \leq i \leq k\}$. As above,
  $h_{k+1}\alpha_{k+1} = f_{k}^{-1}\beta_{k+1}$, where $\beta_{k+1}
  \colon I^{n} \to N \setminus N_{f_{k}}$ is a $Z$-embedding. Observe
  also that $\bigcup\{ f_{k+1}(\alpha_{k}(I^{n}))\colon 1 \leq i \leq
  k+1\} \subseteq N \setminus N_{f_{k+1}}$, $f_{k}^{-1}(f_{k}(m)))=m$
  for each $m \in \bigcup\{ \alpha_{i}(I^{n})\colon 1 \leq i \leq k\}$
  and $f_{k}|\bigcup\{ \alpha_{i}(I^{n})\colon 1\leq i \leq k-1\} =
  f_{k-1}|\bigcup\{ \alpha_{i}(I^{n})\colon 1\leq i \leq k-1\}$. If
  the homeomorphism $h_{k+1}$ is chosen sufficiently close to $h_{k}$,
  then the map $g = \lim\{ f_{k}\} \colon M\to N$ will be
  approximately $n$-soft. Note that $g^{-1}(N_{g}) \cap \bigcup\{
  \alpha_{k}(I^{n}) \colon k \in {\mathbb N}\} = \emptyset$. It then
  follows from the choice of the set $\{ \alpha_{k}(I^{n}) \colon k
  \in {\mathbb N}\}$ that $g^{-1}(N_{g})$ is a $\sigma Z$-set in $M$.

  By Proposition \ref{P:inclusion}, both inclusions $i \colon M
  \setminus g^{-1}(N_{g})\hookrightarrow M$ and $j \colon N \setminus
  N_{g} \hookrightarrow N$ are near-homeomorphisms. Therefore, $g$
  (and hence $f$) can be approximated by a homeomorphism of the form
  $H_{j}g_{0}H_{i}^{-1}$, where $H_{i}$ approximates $i$, $H_{j}$
  approximates $j$ and $g_{0} = g|(M\setminus g^{-1}(N_{g})) \colon
  M\setminus g^{-1}(N_{g}) \to N\setminus N_{g}$.
\end{proof}

For a map $f \colon X \to Y$ and a closed subset $B \subseteq Y$ the
adjunction space $X \cup_{f}B$ is defined to be the disjoint union of
$X\setminus f^{-1}(B)$ and $B$ topologised as follows: $X\setminus
f^{-1}(B)$ itself is open and $f^{-1}(U\setminus B) \cup (U \cap B)$
for $U \subseteq Y$ open in $Y$. Obviously, the map $f$ factors as
follows:

\[
\xymatrix{
  X \ar^{f}[rr] \ar^{f_{B}}[dr] &  & Y \\
  & {X\cup_{f}B} \ar^{p_{B}}[ur] \\
}
\]
\noindent where $f_{B} \colon X \to X \cup_{f}B$ coincides with the
identity on $X\setminus f^{-1}(B)$ and with $f$ on $f^{-1}(B)$ and
$p_{B} \colon X \cup_{f}B \to Y$ coincides with the identity on $B$ and
with $f$ on $X\setminus f^{-1}(B)$. If $f$ is an approximate $n$-soft
map between Polish ${\rm ANE}(n)$-spaces and $B$ is a (strong) $Z$-set
in $Y$, then $X\cup_{f}B$ is also a Polish ${\rm ANE}(n)$-space
containing $B$ as a (strong) $Z$-set and both $f_{B}$ and $p_{B}$ are
approximately $n$-soft. If, in addition, $X$ and $Y$ are
$n$-dimensional N\"{o}beling manifolds, then so is $X \cup_{f}B$.

{\it Proof of \uv{}$\Rightarrow$\nh{}}. Proof follows the proof of
\cite[Characterization Theorem]{bestvina}.  Let $\{ \beta_{k}
\colon k \in {\mathbb N}\} \subset C(I^{n},M)$ be a dense subset of
embeddings of the $n$-dimensional cube into $N$ such that
$\beta_{i}(I^{n}) \cap \beta_{j}(I^{n}) = \emptyset$ for each $i,j \in
{\mathbb N}$ with $i \neq j$.

Let $f_{0}=f$ and consider the above described factorization of $f$
through the adjunction space, i.e. $f_{0} = p_{\beta_{1}(I^{n})}
f_{\beta_{1}(I^{n})}$

\[
\xymatrix{
  M \ar^{f_{0}=f}[rr] \ar_{f_{\beta_{1}(I^{n})}}[dr] &  & N \\
  & {M\cup_{f_{0}}\beta_{1}(I^{n})} \ar_{p_{\beta_{1}(I^{n})}}[ur] \\
}
\]

Note that $N_{f_{\beta_{1}(I^{n})}} \subseteq \beta_{1}(I^{n})$.
Since every compact subset of an $n$-dimensional N\"{o}beling manifold
is a strong $Z$-set (see \cite[Corollary 5.1.6]{chibook}), it follows
from Proposition \ref{P:sigmazset} that there exists a homeomorphism $h
\colon M \to M\cup_{f_{0}}\beta_{1}(I^{n})$ approximating
$f_{\beta_{1}(I^{n})}$ as close as we wish. Let $f_{1} =
p_{\beta_{1}(I^{n})}h$. Clearly, $f_{1}$ approximates $f_{0}$ and is
one to one over $\beta_{1}(I^{n})$. Proceeding in this manner we
construct a sequence $\{ f_{k} \colon k \in {\mathbb N}\}$ of
approximately $n$-soft maps (of $M$ into $N$) such that $f_{k+1}$
approximates $f_{k}$ and is one to one over $\bigcup\{
\beta_{i}(I^{n})\colon 1 \leq i \leq k+1\}$. If $f_{k+1}$ is
sufficiently close to $f_{k}$, then the limit map $g = \lim\{ f_{k}\}
\colon M\to N$ will be approximately $n$-soft. Since $g$ is one to one
over $\bigcup\{ \beta_{k}(I^{n}) \colon k \in {\mathbb N}\}$ it
follows from the choice of the collection $\{ \beta_{k}\}$ that
$N_{g}$ is a $\sigma Z$-set in $N$. By Proposition \ref{P:sigmazset},
$g$, and hence $f$, is a near-homeomorphism.


\begin{thebibliography}{99}



\bibitem{bestvina} M.~Bestvina, P.~Bowers, J.~Mogilsky, J.~Walsh, {\it Characterization of Hilbert space manifolds revisited}, Topology Appl. {\bf 24} (1986), 53--69.

\bibitem{chapmanferry1979} T.A.Chapman, S. Ferry, {\it Approximating homotopy equivalences by homeomorphisms}, Amer. J. Math. {\bf 101} (1986), 53--69.

\bibitem{chibook} A.~Chigogidze, {\it Inverse Spectra}, North Holland, Amsterdam, 1996.

\bibitem{ferry1977} S.~Ferry, {\it The homeomorphism group of a compact Hilbert manifold is an ANR}. Ann. Math., {\bf 106} (1977), 101--119.

%%\bibitem{dra} A.~Dranishnikov, {\it Universal Menger compacta and universal mappings},
%5(Russian) Mat. Sbornik, {\bf 129} (1986), 
%%121-139.

\bibitem{levin} M.~Levin, {\it Characterizing N\"{o}beling spaces}, available onlite at
\url{http://front.math.ucdavis.edu/math.GT/0602361}

\bibitem{levin2}
M.~Levin, {\it A $Z$-set unknotting theorem for N\"obeling manifolds}, available online at \url{http://front.math.ucdavis.edu/math.GT/0510571}.



\bibitem{nagorkophd} A.~Nag\'orko,
{\it Characterization and topological rigidity of N\"{o}beling manifolds},
PhD Thesis, Warsaw University, 2006. Available online at 
\url{http://arxiv.org/abs/math/0602574}.

\bibitem{west}
J.~E.~West, {\it Open problems in infinite-dimensional topology}, 524--597; in: {\it Open Problems in Topology}, North Holland, Amsterdam, 1990 (edited by J. van Mill, G.M. Reed).

\end{thebibliography}
\end{document}